\DeclareMathAlphabet{\pazocal}{OMS}{zplm}{m}{n}
\numberwithin{equation}{section}
\newcommand{\R}{\mathbb{R}}
\newcommand{\p}{{\partial}}
\newcommand{\dd}[2]{\dfrac{\partial #1}{\partial #2}}
\newcommand{\Det}{{\operatorname{Det}}}
\newcommand{\DD}{{\mathcal{D}}}
\newcommand{\epsi}{\varepsilon}
\newcommand{\trace}{{\operatorname{Tr}}}
\newcommand{\lr}[1]{\langle #1 \rangle}
\newcommand{\supp}{\mathrm{supp}}
\newcommand{\Bb}{\mathbb{B}}
\newcommand{\Z}{\mathbb{Z}}
\newcommand{\Id}{{\operatorname{Id}}}
\newcommand{\BB}{\mathcal{B}}
\newcommand{\VV}{{\mathcal{V}}}
\newcommand{\LL}{{\mathcal{L}}}
\newcommand{\C}{\mathbb{C}}
\newcommand{\oz}{\overline{z}}
\newcommand{\az}{\alpha}
\renewcommand{\Re}{\operatorname{Re}}
\newcommand{\Tt}{{\mathbb{T}}}
\newcommand{\lamdba}{{\lambda}}
\newcommand{\de}{\mathrel{\stackrel{\makebox[0pt]{\mbox{\normalfont\tiny def}}}{=}}}
\title[Bound states for rapidly oscillatory Schr\"odinger operators]{Bound states for rapidly oscillatory Schr\"odinger operators in dimension $2$}
\date{\today}
\author{Alexis Drouot}
\email{alexis.drouot@gmail.com}
\newtheorem{thm}{Theorem}
\newtheorem{lem}{Lemma}[section]
\newtheorem{theorem}[thm]{Theorem}
\theoremstyle{definition}
\begin{document}
\maketitle

\begin{abstract}
We study the eigenvalues of Schr\"odinger operators $-\Delta_{\R^2} + V_\epsi$ on $\R^2$ with rapidly oscillatory potential $V_\epsi(x) = W(x,x/\epsi)$, where $W(x,y) \in C^\infty_0(\R^2 \times \Tt^2)$ satisfies $\int_{\Tt^2} W(x,y) dy = 0$. We show that for $\epsi$ small enough, such operators have a unique negative eigenvalue, that is exponentially close to $0$.
\end{abstract}

\section{Introduction.}

We study the $L^2$-eigenvalues of the Schr\"odinger operator $-\Delta_{\R^2}+V_\epsi$ on $\R^2$, where $\epsi$ is a small parameter and $V_\epsi$ is a real-valued, compactly supported, and rapidly oscillatory potential on $\R^2$:
\begin{equation*}
V_\epsi(x) = W(x,x/\epsi), \ \ W(x,y) = \sum_{k \in \Z^2 \setminus 0} W_k(x) e^{iky}.
\end{equation*}
The functions $W_k$ are smooth, with support in $\Bb(0,L) = \{ x \in \R^2, |x| < L \}$, and satisfy $\overline{W_k}=W_{-k}$. The potential $V_\epsi$ is a first approximation to model disordered medias with scale of heterogeneity $\sim \epsi$ -- oscillations play here the role of randomness. This note provides a simple proof of a conjecture of Duch\^ene--Vuki\'cevi\'c--Weinstein \cite{DVW}:

\begin{theorem}\label{thm:1} For $\epsi$ small enough, the operator $-\Delta_{\R^2}+V_\epsi$ has a unique bound state, with energy $E_\epsi$ given by
\begin{equation}\label{eq:1t}
E_\epsi  = - \exp\left( - \dfrac{4\pi}{\epsi^2 \int_{\R^2} \Lambda_0(x) dx + o(\epsi^2) } \right), \ \ \ \Lambda_0(x) \de \sum_{k \neq 0} \dfrac{|W_k(x)|^2}{|k|^2}.
\end{equation}
\end{theorem}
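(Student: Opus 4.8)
The plan is to combine the Birman--Schwinger principle with the logarithmic singularity of the two-dimensional resolvent at threshold, feeding in a homogenization identity that converts the (essentially vanishing) mean of $V_\epsi$ into the effective attraction governed by $\int\Lambda_0$. Write $E=-\kappa^2$ with $\kappa>0$ small, set $R_\kappa=(-\Delta+\kappa^2)^{-1}$, and note that $\psi$ solves $(-\Delta+V_\epsi)\psi=-\kappa^2\psi$ iff $\eta:=V_\epsi\psi$, supported in $\Bb(0,L)$, satisfies $(1+V_\epsi R_\kappa)\eta=0$ on $L^2(\Bb(0,L))$. First I would record the classical planar expansion of the resolvent kernel, $R_\kappa(x,y)=\frac1{2\pi}K_0(\kappa|x-y|)=\frac1{2\pi}\log\frac1\kappa+G_0(x,y)+o(1)$ as $\kappa\to0$, uniformly for $x,y$ bounded, where $G_0(x,y)=-\frac1{2\pi}\log|x-y|+\mathrm{const}$ is the kernel of $(-\Delta)^{-1}$. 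On $L^2(\Bb(0,L))$ this reads $R_\kappa=\frac1{2\pi}\log\frac1\kappa\,P+G_0+o(1)$, with $P$ the rank-one operator $f\mapsto(\int f)\,\mathbb 1$.

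Next I would perform a Grushin/Feshbach reduction around this singular rank-one term. Substituting the expansion into $(1+V_\epsi R_\kappa)\eta=0$ gives $(1+V_\epsi G_0)\eta=-\frac1{2\pi}\log\frac1\kappa\,(\int\eta)\,V_\epsi+o(\|\eta\|)$. Granting that $1+V_\epsi G_0$ is invertible on $L^2(\Bb(0,L))$ for small $\epsi$ (equivalently, that $-\Delta+V_\epsi$ has no zero-energy resonance), one sees that $\eta=0$ whenever $\int\eta=0$; hence any bound state has $\int\eta\neq0$, and dividing out reduces the problem to the scalar secular equation
\[
1=-\frac1{2\pi}\log\frac1\kappa\int_{\R^2}(1+V_\epsi G_0)^{-1}[V_\epsi]\,dx+o(1).
\]
Since $\kappa\mapsto\log\frac1\kappa$ is strictly monotone, this has a unique root in the relevant regime, giving both existence and uniqueness; a standard norm-resolvent homogenization estimate (using $V_\epsi\rightharpoonup0$) confines all eigenvalues to a neighborhood of $0$, so no deeply bound states are missed.

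The heart of the matter is the evaluation of $F_\epsi:=-\int(1+V_\epsi G_0)^{-1}[V_\epsi]$. Expanding the inverse, $\int(1+V_\epsi G_0)^{-1}[V_\epsi]=\int V_\epsi-\langle V_\epsi,(-\Delta)^{-1}V_\epsi\rangle+\cdots$; the first term is $O(\epsi^\infty)$ since $W$ is smooth with zero mean in $y$. For the decisive second term I would use the corrector $\chi(x,y)=\sum_{k\neq0}|k|^{-2}W_k(x)e^{iky}$, which solves $-\Delta_y\chi=W$, so that $-\Delta\big(\epsi^2\chi(x,x/\epsi)\big)=V_\epsi+\epsi\,r_\epsi$ with $r_\epsi$ bounded and oscillatory. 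Integrating by parts (the compact support of $\chi(\cdot,\cdot/\epsi)$ kills boundary terms) yields $\langle V_\epsi,(-\Delta)^{-1}V_\epsi\rangle=\epsi^2\int V_\epsi(x)\chi(x,x/\epsi)\,dx+O(\epsi^3)=\epsi^2\int_{\R^2}\Lambda_0\,dx+o(\epsi^2)$, the last equality by averaging the oscillations and using $\overline{W_k}=W_{-k}$, which selects exactly the diagonal $\sum_k|W_k|^2/|k|^2$. The same averaging shows each further Neumann term is $o(\epsi^2)$, each extra factor $(-\Delta)^{-1}V_\epsi\approx\epsi^2\chi(x,x/\epsi)$ producing either an oscillation paired against a slowly varying factor (hence $O(\epsi^\infty)$) or an additional power of $\epsi$. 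Thus $F_\epsi=\epsi^2\int\Lambda_0+o(\epsi^2)>0$, the secular equation becomes $\log\frac1\kappa=\tfrac{2\pi}{\epsi^2\int\Lambda_0+o(\epsi^2)}$, and squaring through $E_\epsi=-\kappa^2$ produces \eqref{eq:1t}.

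The step I expect to be the main obstacle is the uniform invertibility of $1+V_\epsi G_0$ (absence of a threshold resonance) together with rigorous control of the Neumann tail: $V_\epsi G_0$ is bounded but converges to $0$ only weakly, not in operator norm, so neither its inverse nor the smallness of higher terms is automatic. I would address this by isolating the only non-oscillatory output of $(-\Delta)^{-1}V_\epsi$, which is slowly varying of size $\epsi^2$, so that $V_\epsi G_0$ behaves as an $O(\epsi^2)$ perturbation on the subspace that actually enters the reduction, and by a Fredholm-analytic argument showing that a genuine resonance would force a zero eigenvalue of $-\Delta+V_\epsi$, contradicting the strictly negative $E_\epsi$ just produced.
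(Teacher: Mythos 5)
Your overall strategy coincides with the paper's (both follow Simon \cite{Si}): isolate the rank-one $\log\frac1\kappa$ singularity of the two-dimensional resolvent at threshold, reduce the Birman--Schwinger equation to a scalar secular equation, and identify the coefficient $\epsi^2\int_{\R^2}\Lambda_0$; your corrector computation of $\lr{V_\epsi,(-\Delta)^{-1}V_\epsi}_2$ is equivalent to the paper's Fourier-side evaluation in Lemma \ref{lem:1e}, where the diagonal pairs $k+\ell=0$ produce the main term and the off-diagonal pairs are $O(\epsi^{5/2})$. However, two steps have genuine gaps. The first is the uniform invertibility of $\Id+V_\epsi G_0$, which you rightly flag as the main obstacle. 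The clean mechanism -- and the content of Lemmas \ref{lem:1a} and \ref{lem:1d} -- is that although $V_\epsi G_0$ is not small in norm, its \emph{square} is: the composition contains the factor $G_0V_\epsi G_0$, and since the (cut off) Green operator is bounded $L^2\to H^2$, one gets $|G_0V_\epsi G_0|_\BB\le C\,|\lr{D}^{-2}V_\epsi\lr{D}^{-2}|_\BB=O(\epsi^2)$, so the Neumann series converges after grouping terms in pairs. Your sketch gestures at this, but your fallback argument -- that a threshold resonance ``would force a zero eigenvalue of $-\Delta+V_\epsi$, contradicting the strictly negative $E_\epsi$ just produced'' -- is circular: the existence and strict negativity of $E_\epsi$ were derived \emph{assuming} the invertibility you are trying to establish. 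That fallback should be deleted and replaced by the squared-operator bound, proved directly from the oscillatory estimate $|\lr{D}^{-2}V_\epsi\lr{D}^{-2}|_\BB=O(\epsi^2)$.

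The second gap is uniqueness. ``Since $\kappa\mapsto\log\frac1\kappa$ is strictly monotone, this has a unique root in the relevant regime'' does not suffice: your scalar function $F_\epsi$ depends on $\kappa$ (through the exact regular part of the resolvent and the $o(1)$ errors), its $\kappa$-derivative is of size $|\ln\kappa|$ rather than small, and a priori extra roots could occur anywhere in the fixed interval $[0,M]$ to which the elementary energy bound confines all eigenvalues -- not merely near the exponentially small root you constructed. The paper supplies exactly the missing ingredients: the modulus-of-continuity estimate $|\varphi_V(\lambda)-\varphi_V(\mu)|=O(|\lambda\ln\lambda-\mu\ln\mu|)$ of Lemma \ref{lem:1f}, together with the observation that $\varphi_V=O(\epsi^2)$ forces \emph{every} root to satisfy $|\lambda|\le e^{-c\epsi^{-2}}$, after which a quantitative comparison of $1/\ln\lambda-1/\ln\mu$ against $\varphi_V(\lambda)-\varphi_V(\mu)$ yields a contradiction if two roots exist. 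With the analogous continuity bound in $\kappa$ your monotonicity argument can be repaired, but as written it is asserted rather than proved, and it is precisely the step to which the paper devotes the second half of its proof of Theorem \ref{thm:1}.
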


In one dimension, the study of the spectral properties of $-\p_x^2+V_\epsi$ with $V_\epsi$ rapidly oscillatory originated with Borisov--Gadyl'shin \cite{BG}, who gave a sufficient condition for the existence of a bound state, and derived the asymptotic of the corresponding energy:
\begin{equation}\label{eq:1o}
E_\epsi = -\dfrac{\epsi^4}{4} \int_{\R} \Lambda_0(x) dx, \ \ \Lambda_0(x) \de \sum_{k \neq 0} \dfrac{|W_k(x)|^2}{|k|^2}.
\end{equation}
This study was continued later in Borisov \cite{B} then Duch\^ene--Weinstein \cite{DW}, in particular to include less regular potentials.

Duch\^ene--Vuki\'cevi\'c--Weinstein \cite{DVW} studied  the behavior of scattering quantities of Schr\"odinger operators with potentials which are the sum of a slowly varying term $W_0$ and a rapidly oscillatory term $V_\epsi$. They showed that the transmission coefficient of the \textit{effective potential} $W_0-\epsi^2 \Lambda_0$, which is a $O(\epsi^2)$-perturbation of $W_0$, differs from the one of $W_0+V_\epsi$ by $O(\epsi^3)$.  When $W_0 = 0$, they recovered the result of \cite{BG}: $-\p_x^2+V_\epsi$ has a unique negative eigenvalue satisfying \eqref{eq:1o} for small $\epsi$. They proved a uniform weighted dispersive estimate for the propagator $e^{it(-\p_x^2+V_\epsi)}$ as $\epsi \rightarrow 0$, despite the presence of an eigenvalue near the edge of the continuous spectrum.

Recently, Duch\^ene--Raymond \cite{DR} obtained homogenization results for potentials of the form $\epsi^{-\beta} V_\epsi$, $\beta \in (0,2)$, in dimension $1$, using a normal form approach. Dimassi \cite{Di} applied $\epsi$-semiclassical calculus to show a trace formula and a Weyl law for potentials of the form $\epsi^{-2} V_\epsi$, in any dimension $d$. 

Motivated by \cite{DVW} and by Christiansen \cite{Ch06}, we used a different approach to study in \cite{D} the resonances and eigenvalues of $-\Delta_{\R^d}+W_0+V_\epsi$, in any odd dimension $d$. When $W_0 = 0$, we proved that the resonances and eigenvalues of $V_\epsi$ escape all bounded regions as $\epsi \rightarrow 0$ (except the one converging to $0$ when $d=1$). When $W_0 \neq 0$, we showed that the resonances and eigenvalues of $W_0+V_\epsi$ converge to the one of $W_0$, with a complete expansion in powers of $\epsi$. We improved upon the homogeneization result of \cite{DVW}, refining the effective potential $W_0-\epsi^2 \Lambda_0$ to $W_0-\epsi^2 \Lambda_0-\epsi^3 \Lambda_1$, and deriving it for any odd $d$. We refer to \cite[Figure 2]{D} for numerical results and to \cite[\S 1]{D} for additional references.

A famous result of Simon \cite[Theorem 3.4]{Si} in dimension $2$ predicts that under suitable conditions, a Schr\"odinger operator with small negative potential $-\Delta_{\R^2} + \epsilon\Lambda$ has a unique bound state with energy 
\begin{equation}\label{eq:1f}
E_\epsi  = -\exp\left( -\dfrac{4\pi}{\epsilon \int_{\R^2} \Lambda(x) dx+ o(\epsilon)}   \right).
\end{equation}
This identity put together with \eqref{eq:1t} supports the main idea of \cite{DVW}: the scattering quantities of rapidly oscillatory potentials are similar to the one of suitable small potentials. 

To prove Theorem \ref{thm:1}, we first follow \cite[\S 3]{Si}: we use a modified Fredholm determinant to reduce the study of eigenvalues of $-\Delta_{\R^2}+V_\epsi$ to an equation involving a certain trace. Then, we provide estimates on this trace following some ideas of \cite{D}, ending the proof. We mention that the result of Theorem \ref{thm:1} still applies when $W$ is not smooth but satisfies instead the weaker bound
\begin{equation*}
\sum_{k \neq 0} |W_k|_\infty + \dfrac{|W_k|_{C^1}}{|k|}+ \dfrac{|W_k|_{C^2}}{|k|^2}+ \dfrac{|W_k|_{C^3}}{|k|^3} + \sum_{0 \neq k \neq \ell} \dfrac{|W_k|_{C^3}|W_\ell|_{C^3}}{|k-\ell|^{5/2}} < \infty,
\end{equation*}
with no change in the proof.

\noindent \textbf{Aknowledgement.} We would like to thanks Maciej Zworski for valuable discussions. This research was supported by the NSF grant DMS-1500852 and the Fondation CFM pour la recherche.

\subsection*{Notations} We will use the following:
\begin{itemize}
\item For $x\in \R^2$, $\lr{x}$ denotes the Japanese bracket of $x$: $\lr{x} \de (1+|x|^2)^{1/2}$.
\item The function $z \mapsto \ln(z)$ denotes the holomorphic logarithm on $\C \setminus (-\infty,0]$.
\item $C_0^\infty(\R^2,\R)$ is the set of real smooth compactly supported functions on $\R^2$.
\item $L^2$ denotes the Hilbert space of functions that are squared integrable, with $L^2$-norm denoted by $|f|_2$ and scalar product given by $\lr{f, g}_2 = \int_{\R^2} \overline{f} g$.
\item $C^k$ is the space of functions on $\R^2$ with $k$ continuous and bounded derivatives, with norm $|f|_{C^k} = |f|_\infty+|f'|_\infty+...+|f^{(k)}|_\infty$.
\item We write $\Delta$ for the Laplacian $\Delta_{\R^2}$, and $\hat{f}$ for the Fourier transform of $f$: $\hat{f}(\xi) = \frac{1}{2\pi} \int_{\R^2} e^{-ix\xi} f(x) dx$. $H^s$ denotes the standard Sobolev space on $\R^2$, with norm $|f|_{H^s} = |\lr{\xi}^s \hat{f}|_2 = |(\Id - \Delta)^{s/2}f|_2$.
\item $\BB$ is the Banach space of bounded linear operators from $L^2$ to itself.
\item $\LL^2$ is the Banach space of Hilbert--Schmidt operators on $L^2$.
\end{itemize}

\section{General properties.}

Let $R_0(\lambda,x,y)$ be the kernel of the free resolvent $(-\Delta + \lambda^2)^{-1}$. It its a Hankel function of $\lambda |x-y|$ and one can write
\begin{equation*}
R_0(\lambda,x,y) = - \dfrac{1}{2\pi} \ln(\lambda) + H_0(\lambda,x,y),
\end{equation*}
%\tr{look at Maximal order of growth.. by Christiansen--Hislop; here $H_0^{(1)}(\lambda|x-y|)$ is a first kind hankel function.}
where for every $x \neq y$ the function $\lambda \mapsto H_0(\lambda,x,y)$ is holomorphic in $\{\Re \lambda > 0\}$ and admits a continuous extension to $\{ \Re \lambda \geq 0 \}$ -- see \cite[(12)]{Si}. For $\lambda$ with $\Re \lambda \geq 0$, let $H_0(\lambda)$ be the operator with kernel $H_0(\lambda,x,y)$. When $\lambda \in (0,\infty)$, $R_0(\lambda)$ is selfadjoint and $\ln(\lambda)$ is real, hence $H_0(\lambda)$ is selfadjoint.

Let $\VV \in C_0^\infty(\R^2,\R)$, with support in $\Bb(0,L) = \{ x \in \R^2, |x| \leq L \}$, and $\rho \in C_0^\infty(\R^2, \R)$, equal to $1$ on the support of $\VV$ and $0$ outside $\Bb(0,L)$. We will be interested in the behavior of $K_\VV(\lambda) = \rho R_0(\lambda) \VV$ for $ \lambda$ close to $0$. Write $K_\VV(\lambda) = \Pi_\VV \ln(\lambda) + L_\VV(\lambda)$,
\begin{equation*}
\Pi_\VV \de -\dfrac{1}{2\pi} \rho \otimes \VV, \ \ \ L_\VV(\lambda) \de \rho H_0(\lambda) \VV, \ \ \ \Re \lambda \geq 0.
\end{equation*}
An operator $A : C^\infty_0(\R^2) \rightarrow \DD'(\R^2)$ belongs to $\LL^2$ (the Hilbert--Schmidt class) if and only if its kernel $A(x,y)$ is in $L^2(\R^4,dxdy)$. In this case the $\LL^2$-norm is
\begin{equation*}
|A|_{\LL^2} \de \left(\int_{\R^2 \times \R^2} |A(x,y)|^2 dx dy\right)^{1/2}.
\end{equation*}
We prove the following result, similar to \cite[Proposition 3.2]{Si}:

\begin{lem}\label{lem:1b} The operator $L_\rho(\lambda)$ is in $\LL^2$. Moreover, uniformly locally in $\lambda, \mu$ with nonnegative real parts,
\begin{equation}\label{eq:1b}
|L_\rho(\lambda)-L_\rho(\mu)|_\BB = O(|\lambda \ln(\lambda)-\mu \ln(\mu)|), \ \ \ \ 
|L_\rho(\lambda)|_{L^2 \rightarrow H^2} = O(1).
\end{equation}
\end{lem}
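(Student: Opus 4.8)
The plan is to read everything off the small-argument expansion of the free resolvent kernel. In dimension two $R_0(\lambda,x,y)=\frac{1}{2\pi}K_0(\lambda|x-y|)$, where $K_0$ is the Macdonald function, and the classical expansion $K_0(z)=-(\ln(z/2)+\gamma)I_0(z)+g(z)$, with $I_0(z)=\sum_{m\ge 0}(z/2)^{2m}/(m!)^2$ and $g$ entire vanishing to second order at $0$, isolates the logarithmic term in $\lambda$. Writing $r=|x-y|$, this gives
\begin{equation*}
H_0(\lambda,x,y)=-\frac{\ln\lambda}{2\pi}\big(I_0(\lambda r)-1\big)-\frac{\ln(r/2)+\gamma}{2\pi}\,I_0(\lambda r)+\frac{g(\lambda r)}{2\pi}.
\end{equation*}
The $\lambda$-independent part is exactly $-\frac{1}{2\pi}(\ln(r/2)+\gamma)$, the (cut-off) Green function of $-\Delta$ on $\R^2$, while the remaining terms form a series in $\lambda^{2m}\ln\lambda$ and $\lambda^{2m}$ ($m\ge1$) whose coefficients are, on $\supp\rho\times\supp\rho$, bounded functions of $(x,y)$ built from $r^{2m}$ and $r^{2m}(\ln r+\mathrm{const})$ — continuous, vanishing on the diagonal, with the series converging uniformly there.

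From this decomposition the membership and the $H^2$ bound are immediate. The only singularity of $\rho(x)H_0(\lambda,x,y)\rho(y)$ is the factor $\ln r$, which is square integrable near the diagonal of $\R^2\times\R^2$; since $\rho$ is compactly supported the kernel lies in $L^2(\R^4)$, so $L_\rho(\lambda)\in\LL^2$. For the second estimate in \eqref{eq:1b}, recall that $-\frac{1}{2\pi}\ln|\cdot|$ is a fundamental solution of $-\Delta$, so the operator with kernel $\rho(x)\big(-\frac{1}{2\pi}\ln r\big)\rho(y)$ is a localization of $(-\Delta)^{-1}$, mapping $L^2\to H^2$ boundedly by elliptic regularity; the remaining terms have kernels that are smooth with bounded derivatives on the support, uniformly for $\lambda$ near $0$, hence contribute $O(1)$ to the $L^2\to H^2$ norm.

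For the modulus of continuity, the $\lambda$-independent Green-function part cancels in $H_0(\lambda)-H_0(\mu)$, leaving a difference kernel of the form $\sum_{m\ge1}(\lambda^{2m}\ln\lambda-\mu^{2m}\ln\mu)\,a_m(x,y)+(\lambda^{2m}-\mu^{2m})\,b_m(x,y)$ with $a_m,b_m$ bounded on the support. Because the support is bounded, $|L_\rho(\lambda)-L_\rho(\mu)|_\BB\le |L_\rho(\lambda)-L_\rho(\mu)|_{\LL^2}\le C\sup_{x,y}|H_0(\lambda,x,y)-H_0(\mu,x,y)|$, so it suffices to control the scalar quantities $|\lambda^{2m}\ln\lambda-\mu^{2m}\ln\mu|$ and $|\lambda^{2m}-\mu^{2m}|$ by $|\lambda\ln\lambda-\mu\ln\mu|$. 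This scalar comparison is the heart of the matter and the step I expect to be the main obstacle. The mechanism is that $\psi(\lambda):=\lambda\ln\lambda$ is \emph{expansive} near the origin: since $|\psi'(\lambda)|=|\ln\lambda+1|$ is large there, one gets $|\lambda-\mu|\lesssim|\psi(\lambda)-\psi(\mu)|$. With this, $\lambda^{2m}-\mu^{2m}=(\lambda-\mu)\sum_{j}\lambda^{j}\mu^{2m-1-j}$ and the factorization $\lambda^{2m}\ln\lambda-\mu^{2m}\ln\mu=\lambda^{2m-1}(\psi(\lambda)-\psi(\mu))+(\lambda^{2m-1}-\mu^{2m-1})\psi(\mu)$ let me trade each extra power of $\lambda$ against a uniformly small factor and reduce to the expansiveness of $\psi$. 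The delicate points are the complex setting — the estimates must hold along paths in $\{\Re\lambda\ge0\}$ — and uniformity up to the branch point $\lambda=0$, where none of these functions is holomorphic; away from the origin $H_0$ is smooth in $\lambda$ and both sides are comparable to $|\lambda-\mu|$, so the estimate is elementary there. Combining the scalar bounds with the uniform convergence of the series yields the first estimate in \eqref{eq:1b}.
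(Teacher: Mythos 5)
Your route is in substance the paper's own: the paper also starts from the small-argument expansion of the resolvent kernel (quoting Simon's form $H_0(\lambda,x,y)=-\frac{1}{2\pi}\ln|x-y|+F(\lambda|x-y|)$ with $F(\zeta)=\zeta h(\zeta)\ln\zeta+g(\zeta)$, which is your Macdonald-function expansion repackaged), gets the Hilbert--Schmidt membership from the square-integrability of $\ln|x-y|$ on the compact support, and proves both estimates in \eqref{eq:1b} by kernel bounds plus Schur's test. Two steps of your write-up, however, are not correct as stated.

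First, in the $L^2\rightarrow H^2$ step, your claim that the non-logarithmic remainder has ``kernels that are smooth with bounded derivatives on the support'' is false: the terms $r^{2m}\ln r$ (with $r=|x-y|$, $m\geq 1$) are only $C^{2m-1}$ across the diagonal, and already for $m=1$ one has $\Delta(r^{2}\ln r)=4\ln r+4$, which is unbounded as $r\rightarrow 0$. The correct assertion -- and what the paper actually proves in \eqref{eq:1w}--\eqref{eq:1x} -- is that $\Delta_x$ applied to these kernels yields \emph{locally integrable} functions, uniformly for $\lambda$ in compacts, after which Schur's test gives the uniform $\BB$-bound for $\rho\Delta H_0(\lambda)\rho$; the delta function coming from $\Delta\ln r$ is handled exactly as you propose. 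The fix is one line, but the justification you give does not stand. (Note also that reducing $|\Delta L_\rho(\lambda)|_\BB$ to a bound on $\rho\Delta H_0(\lambda)\rho$ requires controlling the commutator terms where derivatives fall on the cutoffs, with kernels $\sim\nabla_x\ln r\sim r^{-1}$, again locally integrable in $\R^2$; the paper delegates this to \cite[Theorem 2.1]{DZ}.)

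Second, the scalar comparison you correctly identify as the heart of the matter has a genuine defect away from the origin: your claim that ``away from the origin \dots both sides are comparable to $|\lambda-\mu|$'' fails at the interior critical point $\lambda=e^{-1}$ of $\psi(\lambda)=\lambda\ln\lambda$, where $\psi'(\lambda)=\ln\lambda+1$ vanishes, so $\psi(\lambda)-\psi(\mu)$ degenerates quadratically while, say, $|\lambda^{2}-\mu^{2}|\sim 2e^{-1}|\lambda-\mu|$ does not; hence the expansiveness $|\lambda-\mu|\lesssim|\psi(\lambda)-\psi(\mu)|$ cannot hold with a uniform constant on compact sets containing $e^{-1}$, and your reduction breaks there. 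Near $\lambda=0$ your mechanism is sound and can be made rigorous in the complex setting: on the segment joining $\mu$ to $\lambda$ (which stays in the convex set $\{\Re\lambda\geq 0\}$), $\Re(\ln t+1)=\ln|t|+1$ is large and negative and dominates $|\ln t+1|$, so the line integral of $\psi'$ is bounded below by a large multiple of $|\lambda-\mu|$. In fairness, the paper's own pointwise bound $|F(\lambda r)-F(\mu r)|\leq C|\lambda\ln\lambda-\mu\ln\mu|\cdot|\ln r|$ suffers from exactly the same defect at $e^{-1}$ (test it on the entire summand $g$, whose difference is of size $|\lambda-\mu|$ there), and the estimate is only ever invoked for $\lambda,\mu$ exponentially close to $0$ -- in Lemma \ref{lem:1f} and in the uniqueness step of Theorem \ref{thm:1} -- where both arguments are valid. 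A clean statement would restrict \eqref{eq:1b} to a neighborhood of $0$, or weaken the right-hand side to $|\lambda\ln\lambda-\mu\ln\mu|+|\lambda-\mu|$; as written, your proof (like the paper's) should flag this restriction rather than assert the bound on all compacts.
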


\begin{proof} To prove the first part of \eqref{eq:1b}, we write the kernel of $H_0(\lamdba)$ as
\begin{equation*}
H_0(\lambda,x,y) = -\dfrac{1}{2\pi} \ln |x-y| + F(\lambda|x-y|).
\end{equation*}
The function $F : \C \rightarrow \C$ takes the form $F(\zeta) = \zeta h(\zeta) \ln(\zeta) + g(\zeta)$ for some entire functions $g,h$ -- see \cite[(13)]{Si}. For $x \in \supp(\rho), y \in \supp(\rho)$ and $\lambda$ in a compact set, $\lambda |x-y|$ is uniformly bounded. Hence, there exists a constant $C$ such that for such values of $x,y,\mu,\lambda$,
\begin{equation*}
|F(\lambda|x-y|) - F(\mu |x-y|)| \leq C |\lambda \ln(\lambda) - \mu \ln(\mu)| \cdot |\ln |x-y||. 
\end{equation*}
The bound $|L_\rho(\lambda)-L_\rho(\mu)|_\BB = O(|\lambda \ln(\lambda)-\mu \ln(\mu)|)$ follows now from Schur's test and from the local integrability of $\ln|x-y|$ on $\R^2$.

The operator $L_\rho(\lambda)$ belongs to $ \LL^2$ by \cite[Proposition 3.2]{Si}. The second estimate of \eqref{eq:1b} amounts to prove that $\Delta L_\rho(\lambda)$ belongs to $\BB$, uniformly locally for $\lambda \in K$. By the same argument as in \cite[Theorem 2.1]{DZ}, it suffices to show that $\rho \Delta H_0(\lambda) \rho$ belongs to $\BB$. Recall that $\Delta \ln|x| = -2\pi \delta$, so that
\begin{equation}\label{eq:1g}
\Delta_x H_0(\lambda,x,y) = \delta(x-y) + \Delta_x (F(\lambda|x-y|))
\end{equation}
We now compute the Laplacian of $F(\lambda|x-y|)$ with respect to $x$. Note that
\begin{equation}\label{eq:1w}
\Delta (g(\lambda |z|)) =  4\dd{^2 g(\lambda |z|)}{z \p \oz} = \dfrac{\lambda}{|z|} g'(\lambda |z|) + \lambda^2 g''(\lambda |z|).
\end{equation}
Define $f(\lambda |z|) \de \lambda |z| h(\lambda |z|)$. By applying \eqref{eq:1w} to $f \cdot \ln$ instead of $g$, we get \begin{equations}\label{eq:1x}
\Delta (f(\lambda |z|) \ln(\lambda|z|)) = -2\pi f(\lambda |z|) \delta + \dfrac{2 f'(\lambda |z|)}{\lambda |z|} + \left(\dfrac{\lambda f'(\lambda |z|)}{|z|}  + \lambda^2 f''(\lambda |z|)\right) \ln(\lambda |z|) \\
= \dfrac{2f'(\lambda |z|)}{|z|}  + \left(\dfrac{\lambda f'(\lambda |z|)}{|z|}  + \lambda^2 f''(\lambda |z|)\right) \ln(\lambda |z|).
\end{equations} 
In the second line we used that the product of a smooth function vanishing at $0$ with $\delta$ vanishes. The right hand sides of \eqref{eq:1w} and \eqref{eq:1x} both define locally integrable functions of $z \in \C$, uniformly locally in $\lambda$. Since $\rho$ is compactly supported, Schur's test combined with \eqref{eq:1g} shows that $\rho \Delta H_0(\lambda) \rho$ belongs to $\BB$ uniformly in $\lambda$. This concludes the proof. \end{proof}

Since $L_\VV(\lambda)$ is in $\LL^2$, we can define the modified Fredholm determinant $d_\VV(\lambda)$ by
\begin{equation*}
d_\VV(\lambda) \de \Det(\Id + \Psi(L_\VV(\lambda))), \ \ \Psi(z) \de (1+z)e^{-z}-1, \ \ \Re(\lambda) \geq 0,
\end{equation*}
see \cite[Appendix B]{DZ}. As $K_\VV(\lambda)$ is the sum of the rank one operator $\Pi_\VV\ln(\lambda)$ with $L_\VV(\lambda)$, $K_\VV(\lambda)$ belongs to $\LL^2$. We define
\begin{equation*}
D_\VV(\lambda) \de \Det(\Id + \Psi(K_\VV(\lambda))), \ \ \Re(\lambda) > 0.
\end{equation*}
The negative eigenvalues of $-\Delta + \VV$ are exactly the numbers of the form $-\lambda^2$, where  $\lambda \in (0,\infty)$ is a zero of $D_\VV(\lambda)$, see \cite[Theorem 5.4]{GLMZ05}. The next result is similar to \cite[Theorem 3.3]{Si}.

\begin{lem}\label{lem:1c} Let $\lambda$ such that $\Re \lambda > 0$ and $\Id + L_V(\lambda)$ is invertible on $L^2$. Then,
\begin{equation*}
D_\VV(\lambda) = 0 \ \Leftrightarrow \ 1+ \ln(\lambda) \varphi_\VV(\lambda) = 0, \ \ \ \ \varphi_\VV(\lambda) \de \trace\left((\Id + L_\VV(\lambda))^{-1} \Pi_\VV\right).
\end{equation*}
If moreover $\lambda \in (0,\infty)$ then $\varphi_\VV(\lambda) \in \R$.
\end{lem}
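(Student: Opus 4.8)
The plan is to convert the vanishing of the modified Fredholm determinant into a non-invertibility statement for $\Id + K_\VV(\lambda)$, and then to exploit that $\Pi_\VV$ is a rank-one operator, so that non-invertibility is governed by a single scalar condition. First I would recall the algebraic identity $\Id + \Psi(K_\VV(\lambda)) = (\Id + K_\VV(\lambda))e^{-K_\VV(\lambda)}$, which holds because $\Psi(z) = (1+z)e^{-z}-1$. Since $K_\VV(\lambda) \in \LL^2$ and $\Psi(z) = O(z^2)$ near the origin, $\Psi(K_\VV(\lambda))$ is trace class, so $D_\VV(\lambda) = \Det(\Id + \Psi(K_\VV(\lambda)))$ is an honest Fredholm determinant; by the standard dictionary (see \cite[Appendix B]{DZ}) it vanishes exactly when $\Id + \Psi(K_\VV(\lambda))$ fails to be invertible on $L^2$. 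As $e^{-K_\VV(\lambda)}$ is invertible with bounded inverse, this is equivalent to the non-invertibility of $\Id + K_\VV(\lambda)$ itself. Hence $D_\VV(\lambda) = 0$ if and only if $\Id + K_\VV(\lambda)$ is not invertible.

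Next I would use the hypothesis that $\Id + L_\VV(\lambda)$ is invertible to factor
\[
\Id + K_\VV(\lambda) = \Id + L_\VV(\lambda) + \ln(\lambda)\Pi_\VV = (\Id + L_\VV(\lambda))\bigl(\Id + \ln(\lambda)(\Id + L_\VV(\lambda))^{-1}\Pi_\VV\bigr).
\]
Consequently $\Id + K_\VV(\lambda)$ is non-invertible if and only if the second factor $\Id + \ln(\lambda)(\Id + L_\VV(\lambda))^{-1}\Pi_\VV$ is non-invertible. Because $\Pi_\VV = -\tfrac{1}{2\pi}\rho \otimes \VV$ has rank one, so does $R \de (\Id + L_\VV(\lambda))^{-1}\Pi_\VV$. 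For any rank-one operator $R$ and scalar $t$, the Fredholm alternative shows that $\Id + tR$ is invertible precisely when $1 + t\,\trace(R) \neq 0$. Applying this with $t = \ln(\lambda)$ yields that the second factor is non-invertible exactly when $1 + \ln(\lambda)\trace\bigl((\Id + L_\VV(\lambda))^{-1}\Pi_\VV\bigr) = 0$, i.e. $1 + \ln(\lambda)\varphi_\VV(\lambda) = 0$. Chaining the two equivalences gives the claimed characterization.

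For the reality statement, I would observe that when $\lambda \in (0,\infty)$ the free resolvent kernel $R_0(\lambda,x,y)$ is real, and $\ln(\lambda)$ is real, so $H_0(\lambda)$ has a real kernel; hence $L_\VV(\lambda) = \rho H_0(\lambda)\VV$ and $\Pi_\VV$ both have real kernels, and $(\Id + L_\VV(\lambda))^{-1}$ maps real functions to real functions. Therefore $(\Id + L_\VV(\lambda))^{-1}\Pi_\VV = -\tfrac{1}{2\pi}\bigl((\Id + L_\VV(\lambda))^{-1}\rho\bigr) \otimes \VV$ is rank one with real kernel, and its trace $\varphi_\VV(\lambda) = -\tfrac{1}{2\pi}\int_{\R^2} \bigl[(\Id + L_\VV(\lambda))^{-1}\rho\bigr]\VV$ is real.

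I expect the only delicate point to be the first step: correctly invoking the properties of the modified determinant, namely that $\Psi(K_\VV(\lambda))$ is trace class and that the vanishing of $\Det(\Id + \Psi(K_\VV(\lambda)))$ is equivalent to the non-invertibility of $\Id + K_\VV(\lambda)$. Once that dictionary is in place, Steps two and three are elementary linear algebra and require no further estimates.
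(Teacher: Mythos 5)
Your proof is correct, and while it rests on the same pivot as the paper --- the factorization $\Id + K_\VV(\lambda) = (\Id + L_\VV(\lambda))\bigl(\Id + \ln(\lambda)(\Id + L_\VV(\lambda))^{-1}\Pi_\VV\bigr)$, which is exactly \eqref{eq:1h} --- the surrounding machinery is genuinely different on both halves of the lemma. For the equivalence, the paper takes determinants in \eqref{eq:1i} to derive the quantitative identity $\lambda^{\az}D_\VV(\lambda) = d_\VV(\lambda)\left(1+\ln(\lambda)\varphi_\VV(\lambda)\right)$, i.e.\ \eqref{eq:1u}, which requires computing $\trace(K_\VV - L_\VV) = \az\ln(\lambda)$ and some care, since $e^{K_\VV}$ and $e^{L_\VV}$ are not separately of determinant class ($K_\VV$, $L_\VV$ being only Hilbert--Schmidt). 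You avoid this bookkeeping entirely by invoking only the qualitative dictionary for the modified determinant --- $\Psi(K_\VV)$ is trace class because $\Psi(z)=O(z^2)$ and $K_\VV \in \LL^2$, and $\Id + \Psi(K_\VV) = (\Id + K_\VV)e^{-K_\VV}$ with $e^{-K_\VV}$ invertible, so $D_\VV(\lambda)=0$ iff $\Id + K_\VV(\lambda)$ is non-invertible --- together with the rank-one Fredholm alternative ($\Id + tR$ invertible iff $1+t\trace R \neq 0$), which is correctly applied; this is more elementary, at the price of not producing \eqref{eq:1u}, which the paper then reuses. Indeed, the paper's reality proof shows $D_\VV(\lambda), d_\VV(\lambda) \in \R$ for $\lambda \in (0,\infty)$ by conjugation and the cyclicity identity $\Det(\Id+AB)=\Det(\Id+BA)$, and reads off the reality of $\varphi_\VV$ from \eqref{eq:1u}; your argument is instead direct: for $\lambda \in (0,\infty)$ the kernel of $(-\Delta+\lambda^2)^{-1}$ is real (it is $\frac{1}{2\pi}K_0(\lambda|x-y|)$ with $K_0$ the Macdonald function, since the Fourier multiplier $(|\xi|^2+\lambda^2)^{-1}$ is real and even), so $L_\VV(\lambda)$ and $\Pi_\VV$ have real kernels, $(\Id+L_\VV(\lambda))^{-1}$ commutes with complex conjugation, and $\varphi_\VV(\lambda) = -\frac{1}{2\pi}\int_{\R^2}\bigl[(\Id+L_\VV(\lambda))^{-1}\rho\bigr]\VV$ is manifestly real. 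Your route is shorter here and even repairs a small degenerate point in the paper's deduction: at $\lambda=1$, where $\ln(\lambda)=0$, \eqref{eq:1u} gives no information about $\varphi_\VV$ and a continuity remark is implicitly needed, whereas your kernel argument applies uniformly on $(0,\infty)$.
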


\begin{proof} To simplify the notations of this proof, we simply write $L_\VV, K_\VV$ for the operators $L_\VV(\lambda), K_\VV(\lambda)$. Let $\lambda$ with $\Re \lambda > 0$ and $\Id + L_\VV$ invertible on $L^2$. Since $K_\VV =  \Pi_\VV\ln(\lambda) + L_\VV$ we have
\begin{equation}\label{eq:1h}
\Id + K_\VV=(\Id + L_\VV) \left( \Id + (\Id + L_\VV)^{-1} \Pi_\VV \ln(\lambda) \right).
\end{equation}
Recall that $1+z = (1+\Psi(z)) e^z$ so that for every bounded operator $A$, $\Id + A = (\Id + \Psi(A)) e^A$. This identity combined with \eqref{eq:1h} shows that
\begin{equation}\label{eq:1i}
e^{K_\VV}\left(\Id + \Psi(K_\VV) \right) =e^{L_\VV}(\Id + \Psi(L_\VV)) \left( \Id + (\Id + L_\VV)^{-1} \Pi_\VV \ln(\lambda) \right).
\end{equation}
We observe that $K_\VV-L_\VV = \ln(\lambda) \Pi_\VV$. Since $\Pi_\VV$ has kernel $-\rho \otimes \VV/(2\pi)$, its trace is equal to $\az = -\int_{\R^2} \VV/(2\pi)$. Taking the determinant on both sides of \eqref{eq:1i} we obtain
\begin{equation*}
\lambda^\az D_\VV(\lambda) = d_\VV(\lambda) \cdot \Det\left( \Id + (\Id + L_\VV)^{-1} \Pi_\VV \ln(\lambda) \right).
\end{equation*}
Since the operator $(\Id + L_\VV)^{-1} \Pi_\VV \ln(\lambda)$ is of rank one, the determinant of $\Id + (\Id + L_\VV)^{-1} \Pi_\VV \ln(\lambda)$ is equal to $1+\trace\left((\Id + L_\VV)^{-1} \Pi_\VV \ln(\lambda) \right)$. Defining $\varphi_\VV(\lambda) = \trace\left((\Id + L_\VV)^{-1} \Pi_\VV\right)$, we obtain
\begin{equation}\label{eq:1u}
\lambda^\az D_\VV(\lambda) = d_\VV(\lambda) \cdot \left( 1+\ln(\lambda) \varphi_\VV(\lambda) \right).
\end{equation}
Since $\Re \lambda > 0$ we have $\lambda \neq 0$ and the first part of the lemma follows.

For the second part of the lemma, it suffices to prove that $D_\VV(\lambda)$ and $d_\VV(\lambda)$ are both real when $\lambda > 0$, thanks to \eqref{eq:1u}. For such $\lambda$, $R_0(\lambda)$ is selfadjoint. Hence,
\begin{equation*}
\overline{D_\VV(\lambda)} = \Det(\Id + \Psi(K_\VV^*)) = \Det(\Id + \Psi(\VV R_0(\lambda) \rho)). 
\end{equation*}
Since $\Psi$ vanishes at $0$, there exists $\psi$ entire with $\Psi(z) = z\psi(z)$. Therefore,
\begin{equation}\label{eq:1v}
\overline{D_\VV(\lambda)} = \Det(\Id + \VV R_0(\lambda) \rho \psi(\VV R_0(\lambda) \rho)) = \Det(\Id + \rho R_0(\lambda) \rho \cdot \psi(\VV R_0(\lambda) \rho) \VV).
\end{equation}
In the last equality, we used that if $B \in \BB$ and $A$ is trace-class then $\Det(\Id + BA) = \Det(\Id + AB)$, see \cite[(B.5.13)]{DZ}. The power series expansion of $\psi(z) = \sum_{m=0}^\infty a_m z^m$ implies that $\rho R_0(\lambda) \rho \cdot \psi(\VV R_0(\lambda) \rho) \VV$
\begin{equation*}
 = \rho R_0(\lambda) \rho \sum_{m=0}^\infty a_m (\VV R_0(\lambda) \rho)^m \VV = \sum_{m=0}^\infty a_m (\rho R_0(\lambda) \VV)^{m+1} = \Psi(L_\VV).
\end{equation*}
Equation \eqref{eq:1v} now shows that $D_\VV(\lambda) = \overline{D_\VV(\lambda)}$ when $\lambda \in (0,\infty)$. The same arguments (using that $H_0(\lambda)$ is selfadjoint for $\lambda \in (0,\infty)$) shows that $d_\VV(\lambda) \in \R$ if $\lambda \in (0,\infty)$. Hence, \eqref{eq:1u} shows that $\varphi_\VV(\lambda) \in \R$ when $\lamdba \in (0,\infty)$.
\end{proof}

\section{Bound state exponentially close to zero.}

We now focus on the case of a potential $V = V_\epsi$ given by
\begin{equation*}
V_\epsi(x) \de W\left(x,\dfrac{x}{\epsi} \right), \ \ \ \
W(x,y) \de \sum_{k \in \Z^2 \setminus 0} W_k(x)e^{iky}.
\end{equation*}
Here the functions $W_k \in C^\infty(\R^2)$ have supports in $\Bb(0,L) = \{ x \in \R^2, |x| < L \}$, and $\overline{W_k} = W_{-k}$. To simplify notations, we will drop the index $\epsi$ in $V_\epsi$ and write $V = V_\epsi$. We first investigate the invertibility of $\Id + L_V(\lambda)$ for $\epsi$ small enough. We will need the following lemma regarding the behavior as $\epsi \rightarrow 0$ of certain oscillatory integrals:

\begin{lem}\label{lem:1a} The following estimates hold:
\begin{equation*}
|V|_{H^{-2}} = O(\epsi^2), \ \ \ \ \ \left| \lr{D}^{-2} V \lr{D}^{-2} \right|_\BB = O(\epsi^2), \ \ \lr{D} \de (\Id - \Delta)^{1/2}.
\end{equation*}
\end{lem}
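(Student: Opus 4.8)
The plan is to prove the two estimates in sequence, deriving the operator bound from the scalar $H^{-2}$ bound, so that no direct kernel computation is needed. The gain of $\epsi^2$ comes entirely from oscillation: every Fourier mode of $V$ vibrates at frequency $k/\epsi$ with $k \neq 0$, so the weight $\lr{\xi}^{-2}$ appearing in the $H^{-2}$ norm is evaluated near $\xi \sim k/\epsi$ and produces a factor $\lr{k/\epsi}^{-2} \le \epsi^2/|k|^2$. The vanishing of the mean, $\int_{\Tt^2} W \, dy = 0$ (i.e. the absence of a $k=0$ mode), is exactly what makes this mechanism available for every term of the sum.

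First I would establish $|V|_{H^{-2}} = O(\epsi^2)$ by Fourier analysis. Writing $V = \sum_{k \neq 0} W_k \, e^{ik\bullet/\epsi}$, the Fourier transform is $\hat{V}(\xi) = \sum_{k\neq 0} \hat{W}_k(\xi - k/\epsi)$, so by Minkowski's inequality in the weighted space $L^2(\lr{\xi}^{-4} d\xi)$,
\begin{equation*}
|V|_{H^{-2}} \le \sum_{k \neq 0} \left( \int \lr{\xi}^{-4} |\hat{W}_k(\xi - k/\epsi)|^2 \, d\xi \right)^{1/2}.
\end{equation*}
After the substitution $\xi \mapsto \eta + k/\epsi$ I would apply Peetre's inequality $\lr{\eta + k/\epsi}^{-2} \le 2 \lr{k/\epsi}^{-2} \lr{\eta}^2$ to pull out the decaying factor, bounding each term by $2\lr{k/\epsi}^{-2} |W_k|_{H^2}$. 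Since $\lr{k/\epsi}^{-2} \le \epsi^2/|k|^2$, this yields $|V|_{H^{-2}} \le 2\epsi^2 \sum_{k \neq 0} |W_k|_{H^2}/|k|^2$, and the smoothness of $W$ (equivalently the summability hypothesis stated in the introduction) makes the series converge, giving $O(\epsi^2)$.

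For the operator estimate I would reduce to the first bound using that $H^2(\R^2)$ is a multiplication algebra, which holds since $2 > 2/2$. For $f, g \in L^2$ set $\phi = \lr{D}^{-2} f$ and $\psi = \lr{D}^{-2} g$, so that $|\phi|_{H^2} = |f|_2$ and $|\psi|_{H^2} = |g|_2$. Then
\begin{equation*}
\lr{g, \lr{D}^{-2} V \lr{D}^{-2} f}_2 = \int_{\R^2} V \, \overline{\psi} \, \phi = \int_{\R^2} V \, h, \qquad h \de \overline{\psi} \, \phi.
\end{equation*}
The algebra property gives $|h|_{H^2} \le C |\psi|_{H^2} |\phi|_{H^2} = C |f|_2 |g|_2$, while $H^{-2}$--$H^2$ duality gives $|\int V h| \le |V|_{H^{-2}} |h|_{H^2}$. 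Combining the two, $\left| \lr{D}^{-2} V \lr{D}^{-2} \right|_\BB \le C |V|_{H^{-2}} = O(\epsi^2)$.

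The only genuine obstacle is the bookkeeping of the sum over $k$: one must verify that the per-mode constants (here $|W_k|_{H^2}/|k|^2$) are summable, which is immediate for $W \in C_0^\infty(\R^2 \times \Tt^2)$ since the $W_k$, having fixed compact support in $x$, decay faster than any power of $|k|$. A secondary point to handle carefully is the conjugation convention in the duality step, but since $V$ is real and the $H^{-2}$ norm is insensitive to conjugating the test function, this introduces no difficulty.
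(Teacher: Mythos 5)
Your proposal is correct and follows essentially the same route as the paper: the scalar bound via Minkowski's inequality and Peetre's inequality applied mode by mode to extract $\lr{k/\epsi}^{-2}|W_k|_{H^2}$, and the operator bound reduced to $|V|_{H^2 \rightarrow H^{-2}} \leq C |V|_{H^{-2}}$ using the $H^2(\R^2)$ algebra property together with $H^2$--$H^{-2}$ duality. Your phrasing of the second step as a bilinear-form estimate tested against $f,g \in L^2$ is only a cosmetic repackaging of the paper's argument, which obtains the same inequality $|Vu|_{H^{-2}} \leq C|V|_{H^{-2}}|u|_{H^2}$ by passing from $H^2 \rightarrow H^2$ boundedness of multiplication to $H^{-2} \rightarrow H^{-2}$ by duality.
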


\begin{proof} We estimate $|V|_{H^{-2}}$ in a similar way as in the proof of \cite[Theorem 1]{D}:
\begin{equations*}
|V|_{H^{-2}}    = \left| \lr{\xi}^{-2} \sum_{k \neq 0} \widehat{W_k}(\xi -k/\epsi) \right|_2 
    \leq \sum_{k \neq 0} \left| \lr{\xi}^{-2}\lr{\xi -k/\epsi}^{-2}\lr{\xi -k/\epsi}^2\widehat{W_k}(\xi -k/\epsi) \right|_2 \\ \leq \sum_{k \neq 0} |\lr{\xi}^{-2}\lr{\xi -k/\epsi}^{-2}|_\infty \cdot |W_k|_{H^2} 
     \leq C\sum_{k \neq 0} \lr{k/\epsi}^{-2} |W_k|_{H^2} = O(\epsi^2).
\end{equations*}
In the last line we used Peetre's inequality: for $t > 0$, $x,y \in \R^2$,
\begin{equation}\label{eq:1m}
\lr{x}^{-t} \lr{y}^{-t} \leq 2^t \lr{x-y}^{-t}.
\end{equation}
This shows the first estimate.

The boundedness of $\lr{D}^{-2} V \lr{D}^{-2}$ on $L^2$ is equivalent to the boundedness of the multiplication operator by $V$ from $H^2$ to $H^{-2}$. We recall that $H^2$ is an algebra in dimension $2$, and that there exists $C > 0$ such that for any $u,f \in H^2$, $|fu|_{H^2} \leq C|f|_{H^2} |u|_{H^2}$. The multiplication operator by $u \in H^2$ is bounded from $H^2$ to $H^2$, hence from $H^{-2}$ to $H^{-2}$ with same norm and we deduce the inequality $|fu|_{H^{-2}} \leq C|f|_{H^{-2}} |u|_{H^2}$. Applying this with $V=f$ shows that $|V|_{H^2 \rightarrow H^{-2}} \leq C |V|_{H^{-2}} = O(\epsi^2)$. This ends the proof
 \end{proof}

\begin{lem}\label{lem:1d} For every compact subset $K$ of $\{\lambda : \Re(\lambda) \geq 0\}$, there exist $C,\epsi_0 > 0$ such that for all $0 < \epsi \leq \epsi_0$, the inverse of $\Id + L_V(\lambda)$ exist, is given by a convergent Neumann series, and satisfies
\begin{equation*}
\left|(\Id + L_V(\lambda))^{-1}\right|_\BB \leq C.
\end{equation*}
\end{lem}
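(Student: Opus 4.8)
The plan is to exploit a feature that is easy to miss: although $L_V(\lambda)$ has operator norm of order $1$ on $L^2$ as $\epsi \to 0$, its \emph{square} is small. The inverse of $\Id + L_V(\lambda)$ is then obtained from a Neumann series that converges because its terms are summable in pairs, not because $L_V(\lambda)$ is itself small.

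First I would record two elementary facts. Since $\rho$ equals $1$ on $\Bb(0,L) \supseteq \supp(V)$ for every small $\epsi$, the multiplication operators satisfy $\rho V = V\rho = V$, whence
\begin{equation*}
L_V(\lambda) = \rho H_0(\lambda) V = \rho H_0(\lambda)\rho \cdot V = L_\rho(\lambda)\, V .
\end{equation*}
Moreover $|V|_\BB = |V|_\infty \leq \sup_{x,y}|W(x,y)| = O(1)$, and the embedding $H^2 \hookrightarrow L^2$ together with Lemma \ref{lem:1b} gives $|L_\rho(\lambda)|_{L^2\to L^2} = O(1)$. Hence $|L_V(\lambda)|_\BB = O(1)$, uniformly for $\lambda \in K$ and $\epsi$ small. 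This bound cannot be improved to $o(1)$: feeding in $u_\epsi = e^{-ik_0\bullet/\epsi}\phi$ produces a term $W_{k_0}\phi$ in $V u_\epsi$ that is stationary, and on which $H_0(\lambda)$ is not smoothing, so $L_V(\lambda)u_\epsi$ stays bounded away from $0$.

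The key step is the estimate $|L_V(\lambda)^2|_\BB = O(\epsi^2)$. Using $L_V(\lambda) = L_\rho(\lambda) V$ and again $\rho V = V$, I would write $L_V(\lambda)^2 = L_\rho(\lambda)\, V\, L_\rho(\lambda)\, V$ and track an $L^2$ vector through the chain
\begin{equation*}
L^2 \xrightarrow{\ V\ } L^2 \xrightarrow{\ L_\rho(\lambda)\ } H^2 \xrightarrow{\ V\ } H^{-2} \xrightarrow{\ L_\rho(\lambda)\ } L^2 .
\end{equation*}
The first arrow costs $|V|_\infty = O(1)$; the second is $|L_\rho(\lambda)|_{L^2\to H^2} = O(1)$ from Lemma \ref{lem:1b}; the third is the crucial gain $|V|_{H^2\to H^{-2}} = |\lr{D}^{-2}V\lr{D}^{-2}|_\BB = O(\epsi^2)$ from Lemma \ref{lem:1a}; the last uses $|L_\rho(\lambda)|_{H^{-2}\to L^2} = O(1)$. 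This last mapping property I would get by duality: since the kernel of $H_0(\lambda)$ satisfies $H_0(\lambda)^* = H_0(\overline{\lambda})$, one has $L_\rho(\lambda)^* = L_\rho(\overline\lambda)$, and the adjoint of the bound $|L_\rho(\overline\lambda)|_{L^2\to H^2}=O(1)$ (Lemma \ref{lem:1b}, applied on the compact set $\overline K$) is exactly $|L_\rho(\lambda)|_{H^{-2}\to L^2}=O(1)$. Multiplying the four factors yields $|L_V(\lambda)^2|_\BB = O(\epsi^2)$, uniformly on $K$.

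Finally I would assemble the inverse. Choose $\epsi_0$ so small that $|L_V(\lambda)^2|_\BB \leq 1/2$ for all $\lambda\in K$ and $0<\epsi\le\epsi_0$. Then $\Id - L_V(\lambda)^2$ is invertible with inverse $\sum_{m\ge0} L_V(\lambda)^{2m}$, and the identity
\begin{equation*}
(\Id + L_V(\lambda))^{-1} = (\Id - L_V(\lambda))\sum_{m=0}^\infty L_V(\lambda)^{2m} = \sum_{n=0}^\infty (-L_V(\lambda))^n
\end{equation*}
exhibits the inverse as a convergent Neumann series: the bound $|L_V(\lambda)^n|_\BB \le |L_V(\lambda)|_\BB^{\,n\bmod 2}\,|L_V(\lambda)^2|_\BB^{\lfloor n/2\rfloor}$ makes $\sum_n |L_V(\lambda)^n|_\BB$ summable even though $|L_V(\lambda)|_\BB\not<1$, and summing the geometric bound gives $|(\Id + L_V(\lambda))^{-1}|_\BB \le (1+|L_V(\lambda)|_\BB)/(1-|L_V(\lambda)^2|_\BB) = O(1)$, uniformly in $\lambda\in K$. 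I expect the main obstacle to be conceptual rather than computational: because $|L_V(\lambda)|_\BB$ stays of order $1$, naive smallness fails, and everything hinges on detecting that the oscillation of $V$ produces a gain of $\epsi^2$ only after two applications of $L_\rho(\lambda)$ — i.e.\ once the potential is sandwiched between two smoothing resolvents so that the $H^2\to H^{-2}$ estimate of Lemma \ref{lem:1a} can be brought to bear.
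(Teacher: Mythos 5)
Your proposal is correct and follows essentially the same route as the paper: factor $L_V(\lambda)=L_\rho(\lambda)V$, gain $\epsi^2$ from the $H^2\to H^{-2}$ bound $|\lr{D}^{-2}V\lr{D}^{-2}|_\BB=O(\epsi^2)$ of Lemma \ref{lem:1a} sandwiched between the $L^2\to H^2$ and (adjoint) $H^{-2}\to L^2$ bounds on $L_\rho(\lambda)$ from Lemma \ref{lem:1b}, and invert via the Neumann series controlled by $|L_V(\lambda)^2|_\BB<\tfrac12$. You merely spell out steps the paper leaves implicit (the adjoint duality and the explicit summation $(\Id+L_V(\lambda))^{-1}=(\Id-L_V(\lambda))\sum_{m\geq 0}L_V(\lambda)^{2m}$), all of which are accurate.
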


\begin{proof} To prove the lemma it suffices to show that $L_V(\lambda)^2$ is bounded with $|L_V(\lambda)^2|_\BB < \frac{1}{2}$, for $\epsi$ small enough. Recall that $V \in L^\infty$; and (Lemma \ref{lem:1b}) that for $\lambda \in K$, $L_\rho(\lambda)$ is uniformly bounded from $L^2$ to $H^2$ (hence by the adjoint bound, from $H^{-2}$ to $L^2$). These facts imply
\begin{equation*}
|L_V(\lambda)^2|_\BB \leq C|L_\rho(\lambda) V L_\rho(\lambda)|_\BB \leq C |\lr{D}^{-2}V\lr{D}^{-2}|_\BB. 
\end{equation*}
The RHS is $O(\epsi^2)$ by Lemma \ref{lem:1a}. This ends the proof.\end{proof}

The negative eigenvalues $-\lambda^2$ of the operator $-\Delta + V$ on $L^2$ all belong to a fixed compact set: if $-\lambda^2$ is a negative eigenvalue, then there exists $0 \neq u \in L^2$ such that 
\begin{equation*}
-\Delta u + V u + \lambda^2 u = 0.
\end{equation*}
Multiplying by $\overline{u}$ on both sides and integrating by parts, we obtain 
\begin{equation*}
|\nabla u|_2^2 + \lr{Vu, u}_2 + \lambda^2 |u|_2^2 = 0.
\end{equation*}
Thus, $\lambda^2 |u|_2^2 \leq |V|_\infty |u|_2^2$, which shows that $-\lambda^2 \in [-|V|_\infty, 0] \subset [-M^2,0]$ where $M = 2+\sum_{k \neq 0} |W_k|_\infty$ is independent of $\epsi$.

This fact, together with Lemma \ref{lem:1c} and \ref{lem:1d} (applied with $K = K_0 \de [0, M]$), implies that for $\epsi$ small enough the set of negative eigenvalues of $-\Delta + V$ is exactly $\{-\lambda^2\}$, where $\lambda$ solves the equation
\begin{equation*}
1+ \ln(\lambda) \varphi_V(\lambda) = 0, \ \ \lambda \in K_0, \ \ \varphi_V(\lambda) \de \trace\left((\Id + L_V(\lambda))^{-1} \Pi_V\right).
\end{equation*}

\begin{lem}\label{lem:1f} Uniformly for $\lambda, \mu$ in $K_0 = [0,M]$,
\begin{equations}\label{eq:1e}
|\varphi_V(\lambda)| = O(\epsi^2), \ \ \ \ |\varphi_V(\lambda)-\varphi_V(\mu)| = O(|\lambda \ln(\lambda) - \mu\ln(\mu)|).
\end{equations}
\end{lem}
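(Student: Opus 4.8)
The plan is to exploit the rank-one structure of $\Pi_V$ to reduce $\varphi_V(\lambda)$ to a scalar pairing, and then to estimate that pairing while being careful never to let a derivative fall on the rapidly oscillating factor $V$, which would cost a power of $\epsi^{-1}$. Since $\Pi_V$ has kernel $-\rho(x)V(y)/(2\pi)$ and $V$ is real, computing the trace directly gives
\[
\varphi_V(\lambda) = -\frac{1}{2\pi}\lr{V,\ (\Id + L_V(\lambda))^{-1}\rho}_2.
\]
Throughout I would use the factorization $L_V(\lambda) = L_\rho(\lambda)\,V$ (valid because $\rho = 1$ on $\supp V$, so $\rho H_0(\lambda)V = \rho H_0(\lambda)\rho V$), which transfers the mapping properties of $L_\rho(\lambda)$ from Lemma \ref{lem:1b} to $L_V(\lambda)$ while leaving $V$ undifferentiated. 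I would also record that $|V|_2$ and $|V|_\infty$ are $O(1)$ uniformly in $\epsi$ (from $\sum_k |W_k|_\infty < \infty$).

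For the bound $|\varphi_V(\lambda)| = O(\epsi^2)$ I would expand the resolvent to second order,
\[
(\Id + L_V(\lambda))^{-1}\rho = \rho - L_V(\lambda)\rho + L_V(\lambda)^2(\Id+L_V(\lambda))^{-1}\rho,
\]
and treat the three resulting contributions to $\varphi_V(\lambda)$ separately. The first is proportional to $\lr{V,\rho}_2 = \int_{\R^2} V$; since $W$ has no zeroth mode in the fast variable, this is a sum of non-stationary oscillatory integrals, hence $O(\epsi^\infty)$ (and $O(\epsi^2)$ would suffice). The middle, genuinely leading, term has absolute value $\tfrac{1}{2\pi}\bigl|\lr{V,\ L_\rho(\lambda)V}_2\bigr|$, which I bound by $|V|_{H^{-2}}\,|L_\rho(\lambda)V|_{H^2} \le |V|_{H^{-2}}\,|L_\rho(\lambda)|_{L^2\to H^2}\,|V|_2 = O(\epsi^2)$ using the first estimate of Lemma \ref{lem:1a} and the second estimate of Lemma \ref{lem:1b}, uniformly in $\lambda \in K_0$. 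The third term is controlled by Cauchy--Schwarz through $|V|_2\,|L_V(\lambda)^2|_\BB\,|(\Id+L_V(\lambda))^{-1}|_\BB\,|\rho|_2$, where $|L_V(\lambda)^2|_\BB = O(\epsi^2)$ as in the proof of Lemma \ref{lem:1d} (via the second estimate of Lemma \ref{lem:1a}) and $|(\Id+L_V(\lambda))^{-1}|_\BB \le C$ by Lemma \ref{lem:1d}. Summing yields $O(\epsi^2)$.

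For the modulus-of-continuity estimate I would apply the resolvent identity to write
\[
\varphi_V(\lambda)-\varphi_V(\mu) = \frac{1}{2\pi}\lr{V,\ (\Id+L_V(\lambda))^{-1}\bigl(L_V(\lambda)-L_V(\mu)\bigr)(\Id+L_V(\mu))^{-1}\rho}_2,
\]
and then factor $L_V(\lambda)-L_V(\mu) = \bigl(L_\rho(\lambda)-L_\rho(\mu)\bigr)V$. Estimating from the inside out — multiplication by $V$ on $L^2$ costs $|V|_\infty = O(1)$, the difference $L_\rho(\lambda)-L_\rho(\mu)$ costs $O(|\lambda\ln\lambda - \mu\ln\mu|)$ by the first estimate of Lemma \ref{lem:1b}, each resolvent is bounded by $C$ through Lemma \ref{lem:1d}, and the outer pairing against $V$ costs $|V|_2 = O(1)$ — gives the claimed bound $O(|\lambda\ln\lambda - \mu\ln\mu|)$, uniformly on $K_0$.

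The only real subtlety, and the reason the lemma holds, is the bookkeeping: because $V = V_\epsi$ oscillates at frequency $\epsi^{-1}$, the norm $|V|_{C^2} \sim \epsi^{-2}$ is useless, so $V$ must enter \emph{only} through its smallness in $H^{-2}$ (giving $O(\epsi^2)$) or through the harmless $|V|_\infty$ and $|V|_2$, with all the smoothing supplied by $L_\rho(\lambda) = \rho H_0(\lambda)\rho$ mapping $H^{-2}\to L^2$ and $L^2\to H^2$. The factorization $L_V = L_\rho V$ is precisely what makes this arrangement possible, and the crux is forcing the leading pairing $\lr{V,\ L_\rho(\lambda)V}_2$ to absorb the two powers of $\epsi$ through a single use of $|V|_{H^{-2}} = O(\epsi^2)$ against $L_\rho(\lambda)V \in H^2$.
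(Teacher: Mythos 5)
Your proof is correct, but the mechanism you use for the key bound $|\varphi_V(\lambda)| = O(\epsi^2)$ differs from the paper's. The paper expands the resolvent only to first order, $\varphi_V(\lambda) = \frac{1}{2\pi}\left(-\lr{\rho,V}_2 + \lr{L_V(\lambda) f_\lambda, V}_2\right)$ with $f_\lambda = (\Id+L_V(\lambda))^{-1}\rho$, and extracts the two powers of $\epsi$ from the term containing the \emph{full} resolvent by integrating by parts twice against the oscillation: writing $V = \sum_k W_k e^{ik\bullet/\epsi}$ and using the selfadjoint operators $P_k = (k_1 D_{x_1}+k_2 D_{x_2})/|k|^2$ with $P_k e^{ikx/\epsi} = \epsi\, e^{ikx/\epsi}$, it moves $P_k^2$ onto $W_k \cdot L_V(\lambda) f_\lambda$, paying $|W_k|_{C^2}/|k|^2$ per mode via the $L^2 \to H^2$ bound. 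You instead expand to second order and make each piece small by a separate mechanism: the linear term $\lr{V, L_\rho(\lambda)V}_2$ through the $H^{-2}$--$H^2$ duality with $|V|_{H^{-2}} = O(\epsi^2)$ (Lemma \ref{lem:1a}) and $|L_\rho(\lambda)|_{L^2\to H^2} = O(1)$ (Lemma \ref{lem:1b}), and the quadratic remainder through $|L_V(\lambda)^2|_\BB = O(\epsi^2)$ exactly as in the proof of Lemma \ref{lem:1d}. Your route buys economy: it reuses only Lemmas \ref{lem:1a}, \ref{lem:1b}, \ref{lem:1d} with no new oscillatory-integral work, and in fact anticipates step 2 of the paper's own proof of Lemma \ref{lem:1e}, where precisely this second-order expansion, the identity $L_V(\lambda)\rho = L_\rho(\lambda)V$, and the bound $|L_V(\lambda)\rho|_2 \le C|V|_{H^{-2}}$ appear. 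The paper's $P_k$ argument buys a mode-by-mode bound $\sum_k |W_k|_{C^2}/|k|^2$ on the resolvent term itself, which is what survives under the weaker regularity hypothesis stated in the introduction; your duality argument packages the same smallness through Lemma \ref{lem:1a} and is no less rigorous here. Your treatment of the continuity estimate — resolvent identity, the factorization $L_V(\lambda)-L_V(\mu) = (L_\rho(\lambda)-L_\rho(\mu))V$, and the first estimate of Lemma \ref{lem:1b} — coincides with the paper's argument.
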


\begin{proof} We work exclusively with $\lambda, \mu \in K_0$ and $\epsi$ small enough so that Lemma \ref{lem:1d} applies. All the estimates below are uniform for such $\lambda, \epsi$. Write $(\Id + L_V(\lambda))^{-1} = \Id - L_V(\lambda) (\Id + L_V(\lambda))^{-1}$ to get
\begin{equations}\label{eq:1r}
\varphi_V(\lambda)  =  \trace(\Pi_V) - \trace(L_V(\lambda) (\Id + L_V(\lambda))^{-1} \Pi_V) = \dfrac{-\lr{\rho, V}_2 + \lr{L_V(\lambda) f_\lambda, V}_2}{2\pi}, \\
f_\lambda \de (\Id + L_V(\lambda))^{-1}\rho.
\end{equations}
The term $\lr{\rho,V}_2$ is clearly independent of $\lambda$ and equal to $O(\epsi^\infty)$. Let $P_k = (k_1 D_{x_1} + k_2 D_{x_2})/|k|^2$, so that $P_k e^{ikx/\epsi} = \epsi e^{ikx/\epsi}$. Since $P_k$ is selfadjoint, $\lr{L_V(\lambda) f_\lambda, V}_2 =$
\begin{equation*}
\epsi^2 \sum_{k \neq 0} \lr{ W_k \cdot L_V(\lambda) f_\lambda, \rho P_k^2 e^{-ik\bullet/\epsi}}_2 = \epsi^2 \sum_{k \neq 0} \lr{ P_k^2 \left(W_k \cdot L_V(\lambda) f_\lambda\right), \rho e^{-ik\bullet/\epsi}}_2.
\end{equation*}
The Cauchy--Schwarz inequality yields
\begin{equations*}
|\varphi_V(\lambda)| \leq O(\epsi^\infty) + C\epsi^2 \sum_{k \neq 0} \left|P_k^2 \left(W_k \cdot L_V(\lambda) f_\lambda\right)\right|_2.
\end{equations*}
By Lemma \ref{lem:1d}, the operator $(\Id + L_V(\lambda))^{-1}$ is uniformly bounded in $\BB$, hence $|f_\lambda|_2 = O(1)$. Since $L_V(\lambda)$ maps $L^2$ to $H^2$ the operator $P_k^2 W_k L_V(\lambda)$ maps $L^2$ to $L^2$ with
\begin{equation*}
\left|P_k^2 W_k L_V(\lambda)\right|_\BB \leq C\dfrac{|W_k|_{C^2}}{|k|^2} |V|_\infty.
\end{equation*}
Thus $|\varphi_V(\lambda)| = O(\epsi^2)$ as claimed. 

Regarding the second estimate in \eqref{eq:1e}, we use \eqref{eq:1r} and the Cauchy--Schwarz inequality to see that
\begin{equations*}
|\varphi_V(\lambda)-\varphi_V(\mu)|  \leq  \dfrac{1}{2\pi} |L_V(\lambda) f_\lambda - L_V(\mu) f_\mu|_2 |V|_2 \\
\leq C |L_\rho(\lambda)-L_\rho(\mu)|_\BB + C |f_\lambda-f_\mu|_2 \leq C |L_\rho(\lambda)-L_\rho(\mu)|_\BB.
\end{equations*}
In the above we used $f_\lambda = (\Id + L_V(\lambda))^{-1} \rho$, the resolvent identity, and the uniform boundedness of $(\Id + L_V(\lambda))^{-1}$. The conclusion follows now from Lemma \ref{lem:1b}.
\end{proof}

We now identify $\lim_{\epsi \rightarrow 0} \epsi^{-2}\varphi_V(\lambda)$:

\begin{lem}\label{lem:1e} Uniformly for $\lambda \in K_0$,
\begin{equation}\label{eq:1c}
\varphi_V(\lambda) = \dfrac{\epsi^2}{2\pi} \int_{\R^2} \Lambda_0(x) dx + o(\epsi^2), \ \ \Lambda_0(x) \de \sum_{k \neq 0} \dfrac{|W_k(x)|^2}{|k|^2}.
\end{equation}
\end{lem}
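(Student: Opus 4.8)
The plan is to reduce $\varphi_V(\lambda)$ to the single quadratic form $\lr{H_0(\lambda)V,V}_2$ and then read off its leading $\epsi^2$-behaviour on the Fourier side. Starting from \eqref{eq:1r}, one has $2\pi\varphi_V(\lambda)=-\lr{\rho,V}_2+\lr{L_V(\lambda)f_\lambda,V}_2$ with $f_\lambda=(\Id+L_V(\lambda))^{-1}\rho$ and $\lr{\rho,V}_2=O(\epsi^\infty)$. I would first substitute $f_\lambda=\rho-L_V(\lambda)f_\lambda$ to split $\lr{L_Vf_\lambda,V}_2=\lr{L_V\rho,V}_2-\lr{L_V^2f_\lambda,V}_2$ and show that the correction is $O(\epsi^4)$. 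The key point is that $f_\lambda$ is bounded in $H^2$: by Lemma \ref{lem:1d}, $|f_\lambda|_2=O(1)$, and since $L_V=\rho H_0 V=L_\rho(\lambda)V$ maps $L^2\to H^2$ boundedly (Lemma \ref{lem:1b}), the identity $f_\lambda=\rho-L_V f_\lambda$ gives $|f_\lambda|_{H^2}=O(1)$. Lemma \ref{lem:1a} then yields $|Vf_\lambda|_{H^{-2}}\le C|V|_{H^{-2}}|f_\lambda|_{H^2}=O(\epsi^2)$, so by the dual bound of Lemma \ref{lem:1b}, $|L_V f_\lambda|_2=|L_\rho(\lambda)(Vf_\lambda)|_2=O(\epsi^2)$; the same reasoning applied to $L_V(\lambda)^*V=VH_0(\lambda)V$ gives $|L_V(\lambda)^*V|_2=O(\epsi^2)$. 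Hence $|\lr{L_V^2 f_\lambda,V}_2|=|\lr{L_V f_\lambda,L_V^*V}_2|\le|L_V f_\lambda|_2\,|L_V^*V|_2=O(\epsi^4)$.

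It remains to compute $\lr{L_V\rho,V}_2$. Since $\rho\equiv 1$ on $\supp V$ we have $L_V\rho=\rho H_0(\lambda)V$ and thus $\lr{L_V\rho,V}_2=\lr{H_0(\lambda)V,V}_2$. I would insert the decomposition $H_0(\lambda,x,y)=-\frac1{2\pi}\ln|x-y|+F(\lambda|x-y|)$ and split the form into a singular piece $S$ coming from the logarithm and a remainder $R$ coming from $F$. Because $V$ is real, smooth and compactly supported, and $\widehat V(0)=O(\epsi^\infty)$ (so there is no infrared problem), Plancherel in the paper's convention gives the clean formula
\begin{equation*}
S=\int_{\R^2}\frac{|\widehat V(\xi)|^2}{|\xi|^2}\,d\xi,
\end{equation*}
using that $-\frac1{2\pi}\ln|\cdot|$ has Fourier multiplier $\frac1{2\pi|\xi|^2}$.

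The asymptotics of $S$ come from a concentration argument: writing $\widehat V(\xi)=\sum_k\widehat{W_k}(\xi-k/\epsi)$, the diagonal terms dominate, and the substitution $\eta=\xi-k/\epsi$ together with $|\eta+k/\epsi|^{-2}=\epsi^2|k|^{-2}(1+o(1))$ on the Schwartz support of $\widehat{W_k}$ produces $S=\epsi^2\sum_{k\ne0}|k|^{-2}|W_k|_2^2+o(\epsi^2)=\epsi^2\int_{\R^2}\Lambda_0+o(\epsi^2)$. The off-diagonal terms $k\ne\ell$ are negligible since the bumps centred at $k/\epsi$ and $\ell/\epsi$ are separated by $|k-\ell|/\epsi\to\infty$ and $\widehat{W_k}$ is Schwartz; summability of the resulting series is guaranteed by the decay hypotheses on the $W_k$. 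Note that $S$ is $\lambda$-independent, so this step is automatically uniform on $K_0$.

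The main obstacle is the remainder $R=\iint F(\lambda|x-y|)\,\overline{V(y)}V(x)\,dx\,dy$, which must be shown to be \emph{strictly} below $\epsi^2$. The naive bound only gives $R=O(\epsi^2)$, so I would localise the kernel $F(\lambda|\cdot|)$ by a cutoff $\chi$ to $\{|x-y|\le 2L\}$ and exploit the gain of one Fourier-decay order over the logarithm: the computation \eqref{eq:1w}--\eqref{eq:1x} shows that $\Delta\big(F(\lambda|\cdot|)\chi\big)$ is genuinely integrable, \emph{with no Dirac mass} (the atomic part cancels), uniformly for $\lambda\in K_0$. Riemann--Lebesgue then gives $\widehat{F(\lambda|\cdot|)\chi}(\xi)=o(|\xi|^{-2})$ as $|\xi|\to\infty$, uniformly in $\lambda$, and repeating the concentration argument yields $R=o(\epsi^2)$ uniformly on $K_0$. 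Combining the three estimates, $2\pi\varphi_V(\lambda)=S+R-\lr{L_V^2f_\lambda,V}_2+O(\epsi^\infty)=\epsi^2\int_{\R^2}\Lambda_0+o(\epsi^2)$, which is \eqref{eq:1c}. The delicate points are this remainder estimate — requiring the absence of the atomic part in $\Delta F(\lambda|\cdot|)$ and uniform Riemann--Lebesgue over the compact set $K_0$ — together with the uniform summability of the diagonal series in the concentration step.
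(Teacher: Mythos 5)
Your overall architecture is sound and follows the paper's skeleton at the start and the end: the reduction of $\varphi_V(\lambda)$ via \eqref{eq:1r} to a main term $\lr{L_V(\lambda)\rho,V}_2$ plus an $O(\epsi^4)$ error (your $H^2$-bound on $f_\lambda$ and the bound $|Vf_\lambda|_{H^{-2}}\leq C|V|_{H^{-2}}|f_\lambda|_{H^2}$ is a correct variant of the paper's Born-series step), and the diagonal/off-diagonal concentration analysis of $\sum_{k,\ell}$ on the Fourier side matches the paper's steps 4--6. Where you genuinely diverge is the treatment of the kernel: the paper does \emph{not} split $H_0=-\frac{1}{2\pi}\ln|x-y|+F(\lambda|x-y|)$. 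It first proves \eqref{eq:1c} only for $\lambda\in[1,2]$, where reassembling $L_V=K_V-\Pi_V\ln(\lambda)$ gives the full free-resolvent kernel $\phi_\lambda$ with the \emph{bounded, smooth} multiplier $(|\xi|^2+\lambda^2)^{-1}$, and then upgrades to uniformity on $K_0$ by an abstract argument: $\epsi^{-2}\varphi_V$ is uniformly bounded (Lemma \ref{lem:1f}) and holomorphic, so by normal families and unique continuation a single accumulation point on $[1,2]$ forces the limit on all of $K_0$. Your route, if it worked, would buy a direct uniform estimate with no subsequence/holomorphy argument; but it pays for this by having to make sense of the logarithmic kernel at $\lambda$ arbitrary, and that is where it breaks.

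The genuine gap is your ``clean formula'' $S=\int_{\R^2}|\widehat V(\xi)|^2|\xi|^{-2}\,d\xi$. In dimension $2$ the function $|\xi|^{-2}$ is \emph{not} locally integrable, and the distributional Fourier transform of $\ln|x|$ is a finite-part distribution plus a multiple of $\delta_0$, not the function $-2\pi|\xi|^{-2}$. Since $\widehat V(0)=\frac{1}{2\pi}\int_{\R^2}V$ is $O(\epsi^\infty)$ but generically \emph{nonzero}, your integral $S$ diverges logarithmically at $\xi=0$: smallness of the coefficient does not make a log-divergent integral finite, so ``$\widehat V(0)=O(\epsi^\infty)$, hence no infrared problem'' is not a valid dismissal. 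The gap is repairable: each $\widehat{W_k}(\cdot-k/\epsi)$ is a Schwartz bump centered at distance $|k|/\epsi$ from the origin, so $\widehat V$ and all its derivatives are $O(\epsi^\infty)$ uniformly on $\{|\xi|\leq 1/(2\epsi)\}$, whence the finite-part and atomic corrections are $O(\epsi^\infty)$ and your identity holds with $S$ interpreted as a finite part -- but as written the step fails, and it is precisely the difficulty that the paper's restriction to $\lambda\in[1,2]$ (where the multiplier $(|\xi|^2+\lambda^2)^{-1}$ is nonsingular) was designed to avoid. Your treatment of the remainder $R$ is, by contrast, essentially right: the absence of the atomic part in \eqref{eq:1x} (because $f(\zeta)=\zeta h(\zeta)$ vanishes at $0$) does give $\Delta\bigl(F(\lambda|\cdot|)\chi\bigr)\in L^1$ uniformly, and uniform Riemann--Lebesgue over $K_0$ follows from $L^1$-continuity of $\lambda\mapsto\Delta\bigl(F(\lambda|\cdot|)\chi\bigr)$ on the compact $K_0$, a point you should verify explicitly since the terms $\lambda^2 f''(\lambda|z|)\ln(\lambda|z|)$ degenerate as $\lambda\to 0$.
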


\begin{proof} 1. We claim that it is enough to show \eqref{eq:1c} for $\lambda \in [1,2]$. Indeed, Lemma \ref{lem:1f} shows that $\epsi^{-2} \varphi_V$ is uniformly bounded. In addition, it is a holomorphic function of $\lambda \in K_0$. Hence, after possibly passing to a subsequence, it converges uniformly locally to a holomorphic function. If \eqref{eq:1c} holds for $\lambda \in [1,2]$, $\epsi^{-2} \varphi_V$ has only one accumulation point on $[1,2]$, hence -- by the unique continuation principle -- only one accumulation point on $K_0$. This would prove that \eqref{eq:1c} holds uniformly on $K_0$. Below we show \eqref{eq:1c} for $\lambda \in [1,2]$. We will always assume that $\epsi$ is small enough so that Lemma \ref{lem:1d} holds.

2. By expanding $(\Id + L_V(\lambda))^{-1}$ into a finite Born series,
\begin{equations*}
\varphi_V(\lambda) = \trace(\Pi_V) - \trace(L_V(\lambda) \Pi_V) + \trace\left(L_V(\lambda) (\Id + L_V(\lambda))^{-1} L_V(\lambda) \Pi_V\right) \\
= -\dfrac{1}{2\pi} \int_{\R^2} V(x) dx + \dfrac{1}{2\pi} \lr{L_V(\lambda) \rho, V}_2 -\dfrac{1}{2\pi} \lr{(\Id + L_V(\lambda))^{-1} L_V(\lambda) \rho, L_V(\lambda)^* V}_2.
\end{equations*}
The first term is $O(\epsi^\infty)$. We note that since $\lambda \in [1,2]$, $L_V(\lambda)^* = L_V(\lambda)$ and the third term can be bounded using the Cauchy--Schwarz inequality:
\begin{equation}\label{eq:1d}
\left|\lr{(\Id + L_V(\lambda))^{-1} L_V(\lambda) \rho, L_V(\lambda) V}_2\right| \leq \left|(\Id + L_V(\lambda))^{-1}\right|_\BB |L_V(\lambda) \rho|_2 |L_V(\lambda) V|_2.
\end{equation}
We note that $L_V(\lambda) \rho = L_\rho(\lambda) V$ and we recall that $L_V(\lambda)$ is bounded from $H^{-2}$ to $L^2$. Hence Lemma \ref{lem:1a} implies that $|L_V(\lambda) \rho|_2 \leq C |V|_{H^{-2}} = O(\epsi^2)$. Similarly,
\begin{equation*}
|L_V(\lambda) V|_2 \leq C |L_\rho(\lambda) V| = O(\epsi^2)
\end{equation*} 
This proves that the RHS of \eqref{eq:1d} is $O(\epsi^4)$. The steps below are devoted to estimating the leading term in the expansion of $\varphi_V(\lambda)$: $\lr{L_V(\lambda) \rho, V}_2$. 

3. Using $L_V(\lambda) = K_V(\lambda)-\Pi_V \ln(\lambda)$, we have uniformly on $[1,2]$
\begin{equations}\label{eq:1z}
 \lr{L_V(\lambda) \rho, V}_2 % = \lr{K_V(\lambda) \rho, V} - \lr{\Pi_V \rho, V} \ln(\lambda) 
 = \int_{\R^2 \times \R^2} R_0(\lambda,x,y) V(y) V(x) dx dy + \dfrac{\ln(\lambda)}{2\pi} \left(\int_{\R^2}V(x) dx\right)^2 \\ 
= \int_{\R^2 \times \R^2} \phi_\lambda(x-y) V(y) V(x) dx dy + O(\epsi^\infty).
\end{equations}
Here the function $\phi_\lambda$ has Fourier transform equal to $\widehat{\phi_\lambda}(\xi) = (|\xi|^2+\lambda^2)^{-1}$. The identity \eqref{eq:1z} and the Plancherel formula show that $\lr{L_V(\lambda) \rho, V}_2$ is equal modulo $O(\epsi^\infty)$ to
\begin{equations*}
\lr{\widehat{\phi_\lambda} \widehat{V}, \widehat{V}}_2 = \int_{\R^2} \dfrac{\widehat{V}(\xi) \widehat{V}(-\xi)}{|\xi|^2 + \lambda^2} d\xi.
\end{equations*}
As the Fourier transform of $V$ is given by $\sum_{k \neq 0} \widehat{W_k}(\xi-k/\epsi)$ we obtain
\begin{equations*}
\lr{L_V(\lambda) \rho, V}_2  =  \sum_{k,\ell} I[W_k,W_\ell] + O(\epsi^\infty), \ \ \ \  
I[W_k,W_\ell] \de \int_{\R^2} \dfrac{\widehat{W_k}(\xi-k/\epsi) \widehat{W_\ell}(-\xi+\ell/\epsi)}{|\xi|^2+\lambda^2}  d\xi.
\end{equations*}
We will study $I[W_k,W_\ell]$ depending whether $k+\ell \neq 0$ or $k+\ell = 0$. 

4. Assume that $k+\ell \neq 0$. We have
\begin{equations*}
|I[W_k,W_\ell]| 
\leq C |W_k|_{C^3} |W_\ell|_{C^3} \int_{\R^2} \lr{\xi}^{-2} \lr{\xi-k/\epsi}^{-3} \lr{-\xi + \ell/\epsi}^{-3} d\xi \\ \leq C\epsi^{5/2} \dfrac{|W_k|_{C^3} |W_\ell|_{C^3}}{|k-\ell|^{5/2}} \int_{\R^2} \lr{\xi}^{-2} \lr{\xi-k/\epsi}^{-1/2} \lr{-\xi + \ell/\epsi}^{-1/2} d\xi \end{equations*}
To get the second line, we controlled $\lr{\xi-k/\epsi}^{-5/2} \lr{-\xi + \ell/\epsi}^{-5/2}$ by $\lr{(k-\ell)/\epsi}^{-5/2}$ with Peetre's inequality \eqref{eq:1m}. H\"older's inequality yields
\begin{equation*}
|I[W_k,W_\ell]| \leq C\epsi^{5/2} \dfrac{|W_k|_{C^3} |W_\ell|_{C^3}}{|k-\ell|^{5/2}}.
\end{equation*}

5. We now focus on the case $k+\ell=0$. A substitution yields
\begin{equation*}
I[W_k, W_{-k}] = \epsi^{2} \int_{\R^2} \dfrac{\widehat{W_k}(\xi) \widehat{W_{-k}}(-\xi)}{|\epsi\xi+k|^2+(\epsi\lambda)^2}  d\xi. 
\end{equation*}
We split the domain of integration $\R^2$ into two parts: $\Bb(0,\epsi^{-{3/4}})$ and $\R^2 \setminus \Bb(0,\epsi^{-{3/4}})$. If $\xi \in \R^2 \setminus \Bb(0,\epsi^{-{3/4}})$ then $|\epsi \xi + k| + (\epsi \lambda)^2 \geq (\epsi \lambda)^2 \geq \epsi^2$ since $\lambda \in [1,2]$. Therefore
\begin{equations*}
\left|\int_{\R^2 \setminus \Bb(0,\epsi^{-1/2})} \dfrac{\widehat{W_k}(\xi) \widehat{W_{-k}}(-\xi)}{|\epsi\xi+k|^2+(\epsi\lambda)^2}  d\xi \right| \leq \int_{|\xi| \geq \epsi^{-{3/4}}} \dfrac{1}{\epsi^2} |\widehat{W_k}(\xi) \widehat{W_{-k}}(-\xi)| d\xi \\ \leq C \int_{|\xi| \geq \epsi^{-{3/4}}} \dfrac{1}{\epsi^2} \lr{\xi}^{-6} |W_k|_{C^{3}}  |W_{-k}|_{C^{3}} d\xi  \leq C \epsi |W_k|_{C^{3}}  |W_{-k}|_{C^{3}}.
\end{equations*}

We now assume that $\xi \in \Bb(0,\epsi^{-{3/4}})$, hence $|\epsi \xi + k| \geq |k|-\epsi^{1/4} \geq |k|/2$. This implies
\begin{equations*}
\dfrac{1}{|\epsi \xi+k|^2+(\epsi \lambda)^2} -\dfrac{1}{|k|^2} = \dfrac{-(\epsi \lambda)^2 - \epsi^2 |\xi|^2 -2 \lr{\epsi\xi,k}}{|k|^2 (|\epsi \xi+k|^2+(\epsi \lambda)^2)} = O(\epsi^{1/4}|k|^{-3}).
\end{equations*}
This implies
\begin{equations*}
\int_{\Bb(0,\epsi^{-1/2})} \dfrac{\widehat{W_k}(\xi) \widehat{W_{-k}}(-\xi)}{|\epsi\xi+k|^2+(\epsi\lambda)^2} d\xi  = \int_{\Bb(0,\epsi^{-1/2})} \dfrac{\widehat{W_k}(\xi) \widehat{W_{-k}}(-\xi)}{|k|^2} d\xi + O(\epsi^{{1/4}}) \dfrac{|W_k|_{C^2} |W_{-k}|_{C^2}}{|k|^3} \\
 = \int_{\R^2} \dfrac{\widehat{W_k}(\xi) \widehat{W_{-k}}(-\xi)}{|k|^2} d\xi + O(\epsi^{1/4}) \dfrac{|W_k|_{C^2} |W_{-k}|_{C^2}}{|k|^3}.
\end{equations*}

6. Combine the results of steps 1-5 to get
\begin{equations*}
\epsi^{-2} \varphi_V(\lambda) = \dfrac{1}{2\pi} \sum_{k \neq 0} I[W_k,W_{-k}] + o(1) = \dfrac{1}{2\pi} \sum_{k \neq 0} \dfrac{1}{|k|^2} \int_{\R^2} \widehat{W_k}(\xi) \widehat{W_{-k}}(-\xi) d\xi + o(1) \\ = \dfrac{1}{2\pi} \sum_{k \neq 0} \dfrac{1}{|k|^2} \lr{W_k,\overline{W_{-k}}}_2 + o(1) = \dfrac{1}{2\pi} \int_{\R^2} \Lambda_0(x) dx+ o(1), \ \ \ \Lambda_0(x) = \sum_{k\neq 0} \dfrac{|W_k(x)|^2}{|k|^2}.
\end{equations*}
In the above we used that as $V$ is real-valued, $W_k=\overline{W_{-k}}$. This estimate completes the proof of the lemma.\end{proof}

We finally turn to the proof of Theorem \ref{thm:1}.

\begin{proof}[Proof of Theorem \ref{thm:1}] By the discussion following the proof of Lemma \ref{lem:1d}, for $\epsi$ small enough the negative eigenvalues of $-\Delta+V$  are all in a fixed  compact set and are the negative squares of the resonances of $V$ in $K_0$. We now work for $\lambda \in K_0$. 

For $\epsi$ small enough, $\Id + L_V(\lambda)$ is invertible by Lemma \ref{lem:1d}. Lemma \ref{lem:1c} shows that the resonances of $V$ are the zeros of $1+\ln(\lambda) \varphi_V(\lambda)$. Since $\varphi_V(\lambda) = \frac{\epsi^2}{2\pi}  \int_{\R^2} \Lambda_0 + o(\epsi^2)$ uniformly on $K_0 = [0,M]$, if $\epsi$ is small enough then
\begin{equation*}
\lim_{\lambda \rightarrow 0} 1+ \ln(\lambda) \varphi_V(\lambda) = -\infty, \ \ \ \  1+\ln(M) \varphi_V(M) \geq \dfrac{1}{2}.
\end{equation*}
We recall that $\varphi_V$ is real-valued on $K_0$. The intermediate value theorem shows that $1 + \ln(\lambda) \varphi_V(\lambda)$ has at least one zero $\lambda$ on $[0,M]$. It must satisfy
\begin{equation}\label{eq:1p}
\ln(\lambda) = -\dfrac{1}{\varphi_V(\lambda)} = - \dfrac{2\pi}{\epsi^2 \int_{\R^2} \Lambda_0(x) dx + o(\epsi^2)},
\end{equation}
or equivalently,
\begin{equation*}
\lambda = \exp\left( - \dfrac{2\pi}{\epsi^2 \int_{\R^2} \Lambda_0(x) dx + o(\epsi^2) } \right).
\end{equation*}

It remains to show that $\lambda$ is the unique resonance of $V$ in $K_0$ for $\epsi$ small enough. We argue as in \cite[Theorem 2.3]{Si}. If $\mu \neq \lambda$ is another resonance in $K_0$ then $1+\ln(\mu) \varphi_V(\mu) = 0$. Hence,
\begin{equation}\label{eq:1q}
\left| \dfrac{1}{\ln(\mu)} - \dfrac{1}{\ln(\lambda)} \right| = |\varphi_V(\mu)-\varphi_V(\lambda)|.
\end{equation}
In addition, for $\epsi$ small enough, $|\ln(\lambda)|^{-1} \geq c \epsi^2$ and $|\ln(\mu)|^{-1} \geq c \epsi^2$. Hence, a lower bound for the right hand side of \eqref{eq:1q} is
\begin{equation*}
\left| \dfrac{1}{\ln(\mu)} - \dfrac{1}{\ln(\lambda)} \right| \geq \dfrac{1}{|\ln(\lambda)| |\ln(\mu)|} \left|\int_\lambda^\mu \dfrac{dt}{t}\right| \geq \dfrac{|\lambda-\mu|}{|\ln(\lambda)| |\ln(\mu)||\lambda + \mu|} \geq \dfrac{c^2\epsi^4 |\lambda-\mu|}{|\lambda + \mu|}.
\end{equation*}
An upper bound for the left hand side of \eqref{eq:1q} is provided by Lemma \ref{lem:1f}: $|\varphi_V(\lambda)-\varphi_V(\mu)|\leq C|\lambda\ln(\lambda)-\mu \ln(\mu)|$. Therefore, \eqref{eq:1q} implies
\begin{equation*}
\dfrac{c^2 \epsi^4}{|\lambda + \mu|} \leq C \dfrac{|\lambda \ln(\lambda)-\mu \ln(\mu)|}{|\lambda-\mu|}.
\end{equation*}
Since the derivative of $t\ln(t)$ is $1 + \ln(t)$, and since $\lambda, \mu$ remain in a bounded set with $\ln(\lambda) = O(\epsi^{-2})$ (and a similar bound for $\mu$), we deduce that $\epsi^4 \leq C\epsi^{-2}|\lambda + \mu|$. In addition, $|\ln(\lambda)|^{-1} \geq c \epsi^2$ implies $|\lambda| \leq e^{-c^{-1}\epsi^{-2}}$ (and a similar bound for $\mu$). Hence, the existence of $\mu$ implies $c^2\epsi^4 \leq 2C \epsi^{-2} e^{-c^{-1}\epsi^{-2}} = O(\epsi^\infty)$. This is a contradiction.
\end{proof}

\end{document}